\newtheorem{Theorem}{Theorem}[section]
\newtheorem{Proposition}{Proposition}[section]
\newtheorem{Remark}{Remark}[section]
\newtheorem{Claim}{Claim}[section]
\newtheorem{Question}{Question}[section]
\begin{document}
\title{xxxx}
\date{}
\title{On arithmetic progressions in self-similar sets}
\author{Kan Jiang, Qiyang Pei and Lifeng Xi \thanks{Corresponding author}}
\maketitle{}
\begin{abstract}
Given a sequence $\{b_{i}\}_{i=1}^{n}$ and a ratio $\lambda
\in (0,1),$ let $E=\cup_{i=1}^n(\lambda E+b_i)$  be a homogeneous self-similar set.
In this paper, we study the existence and maximal length of arithmetic
progressions in $E$. Our main idea is from the multiple $\beta$-expansions.
 
\noindent   Key words: Arithmetic progressions;  self-similar sets; $\beta$-expansions 

\noindent  AMS Subject Classifications:  28A80, 28A78.
\end{abstract}

\section{Introduction}
An arithmetic progression in $\mathbb{R}$ is of the form
\begin{equation*}
P=\{a,a+\delta ,a+2\delta ,a+3\delta ,\cdots ,a+(k-1)\delta \}
\end{equation*}%
for some $a\in \mathbb{R}$, $\delta \in \mathbb{R}^{+}$ and $k\in \mathbb{N}%
^{+}$. We say $P$ is an arithmetic progression with length $k$. Finding an
arithmetic progression in a subset of $\mathbb{R}$ is a hot spot in
combinatorial number theory and ergodic theory. Under some conditions, the
existence of an arithmetic progression of a set inspires many scholars to
investigate. In the setting of discrete case, Erd\H{o}s and Tur\'{a}n \cite{ErdosPaul}
conjectured a subset of natural numbers with positive density necessarily
contains arbitrarily long arithmetic progressions. This conjecture was
solved by Roth  \cite{Roth53} if the length of the arithmetic progression is $3$.  Roth and other scholars tried to prove  the  Erd\H{o}s-Tur\'{a}n conjecture, but without full success. 
16 years later, 
Szemer\'{e}di \cite{Szemeredi1},  used  the purely combinatorial methods, proved that the Erd\H{o}s-Tur\'{a}n conjecture holds if the length of the  arithmetic progression is $4.$ In \cite{Szemeredi2}, Szemer\'{e}di  extended Roth's theorem to arbitrarily long arithmetic progressions, and completely  addressed the Erd\H{o}s-Tur\'{a}n conjecture.  From then on, many different proofs of Szemer\'{e}di's theorem were found. For instance, Furstenberg \cite{Furstenberg} proved that the Szemer\'{e}di's theorem is equivalent to the multiple recurrence theorem in ergodic theory, and  gave a proof of the multiple recurrence theorem. Therefore, he obtained a new proof of Szemer\'{e}di's theorem, for a survey of this topic, see \cite{TTT}.
 It is natural to investigate to  a subset of natural
numbers  which is of  zero density. In this case,  such set may still contain arbitrarily long
arithmetic progressions. For instance, Green and Tao \cite{GT1} proved their celebrated
Green-Tao theorem for the primes.

In the setting of continuous case, Steinhaus proved that for any set $
E\subset \mathbb{R}$ with positive Lebesgue measure must contain arbitrarily
long arithmetic progressions. Steinhaus' result is a consequence of the Lebesgue density theorem. {\L}aba and Pramanik \cite{Laba2016}, using some techniques from  Fourier analysis,  proved that if a closed
subset $E$ of $\mathbb{R}$ with Hausdorff dimension that is close to one,
and $E$ supports a probability measure which obeies appropriate Fourier
decay and mass decay, then $E$ contains non-trivial arithmetic progressions
with length $3$.  Shmerkin \cite{Shmerkin2017}  constructed a   Salem set in $\mathbb{R}\setminus \{Z\}$ which does not include any arithmetic progression with length $3$. 
 Fraser and Yu \cite{Yu2016} proved that for any one-dimensional set, if its Assouad dimension is strictly smaller than 1, then it cannot  contain arbitrarily
long arithmetic progressions. 
Li,   Wu and Xiong \cite{LWX} proved that for a special class of  Moran set, it  contains arbitrarily
long arithmetic progressions if and only if the Assouad dimension of the associated set is $1.$ Recently, 
Chaika \cite{Jon} proved that for a class of middle-$N$th Cantor set, when the contracitve ratio tends to the $1/2$,  the length of arithmetic progressions goes to infinity. Chaika's main idea, to prove the existence of the arithmetic progressions, is using the  intersection of the Cantor set with its  multiple  translations. 
 Later, Broderick, Fishman and Simmons \cite{BFS} proved the quantitative result of the length of the arithmetic progressions. They gave the approximated length of the arithmetic progressions when the contracitve ratio tends to the $1/2$.
Their idea was motivated by the Schmidt's game, which is  a very useful tool in the setting of Diophantine approximation. 

In this paper, we shall consider the arithmetic progressions in self-similar sets.  We first review the main result of  Broderick, Fishman and Simmons. 
Let $K_{\epsilon}$ be the attractor  of the IFS $$\left\{f_1(x)=\dfrac{1-\epsilon}{2}x, f_2(x)=\dfrac{1-\epsilon}{2} x+\dfrac{1+\epsilon}{2}\right\}, 0<\epsilon<1.$$
Broderick, Fishman and Simmons \cite{BFS}  proved the following result. 
\begin{Theorem}\label{Motivation}
Let $L_{AP}(K_{\epsilon})$ denote the maximal length of an arithmetic progression in  $K_{\epsilon}$. Then for all $\epsilon>0$ sufficiently small and $n\in \mathbb{N}$ sufficiently large, we have 
$$\dfrac{1/\epsilon}{\log 1/\epsilon}\lesssim L_{AP}(K_{\epsilon})\leq 1/\epsilon +1$$
where $A\lesssim B$ means that there exists a constant  $C$ such that $A\leq CB.$ Moreover, 
$L_{AP}(K_{\epsilon})\geq 4$ if and only if $0<\epsilon\leq 1/3$. 
\end{Theorem}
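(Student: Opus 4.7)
The plan is to split the proof into three pieces: the upper bound, the length-four threshold, and the quantitative lower bound.

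For the upper bound $L_{AP}(K_{\epsilon})\leq 1/\epsilon+1$, I would exploit the self-similar structure iteratively. The two cylinders $f_{1}(K_{\epsilon})\subset[0,(1-\epsilon)/2]$ and $f_{2}(K_{\epsilon})\subset[(1+\epsilon)/2,1]$ are separated by a gap of length exactly $\epsilon$, so any AP with common difference $\delta<\epsilon$ must lie entirely inside one of them. Applying $f_{1}^{-1}$ or $f_{2}^{-1}$ produces a new AP in $K_{\epsilon}$ with the same number $k$ of terms and common difference $2\delta/(1-\epsilon)$. Iterating this rescaling forces the difference to grow geometrically, and after finitely many steps one has $\delta_{n}\geq\epsilon$; the rescaled AP still lies in $[0,1]$, hence $(k-1)\delta_{n}\leq 1$ and thus $k\leq 1/\epsilon+1$. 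The implication $\epsilon>1/3\Rightarrow L_{AP}(K_{\epsilon})\leq 3$ then follows, since $\epsilon>1/3$ forces $1/\epsilon+1<4$.

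For the converse $\epsilon\leq 1/3\Rightarrow L_{AP}(K_{\epsilon})\geq 4$ I would exhibit an explicit four-term AP. At $\epsilon=1/3$ the AP $\{0,1/3,2/3,1\}$ works, since $\{0,1/3\}$ and $\{2/3,1\}$ are the boundaries of the two level-one cylinders of $K_{1/3}$. For $\epsilon<1/3$ the condition $\delta\geq\epsilon$ (needed whenever the AP bridges the central gap) is compatible with the span constraint $3\delta\leq 1$, so there is a genuine interval of admissible common differences. A case analysis of the symbolic pattern of the AP across the cylinders, $(L,L,R,R)$, $(L,L,L,R)$ or $(L,R,R,R)$, reduces the existence of such an AP to finding two matched pairs in $K_{\epsilon}$ at the appropriate separation, which can be produced from cylinder endpoints at suitable depth.

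The main obstacle is the quantitative lower bound $L_{AP}(K_{\epsilon})\gtrsim(1/\epsilon)/\log(1/\epsilon)$. Here I would use the multiple $\beta$-expansion viewpoint emphasized in the abstract: every $x\in K_{\epsilon}$ has a unique coding $x=\frac{1+\epsilon}{2}\sum_{n\geq 1}a_{n}\lambda^{n-1}$ with $\lambda=(1-\epsilon)/2<1/2$ and $a_{n}\in\{0,1\}$. The strategy is to pick a common difference $\delta$ proportional to $\lambda^{N}$ for $N\approx\log(1/\epsilon)$, and a starting point whose first $N$ digits leave controlled slack in the tail. Because $\lambda<1/2$, incrementing the $N$-th digit does not cascade cleanly, and each addition of $\delta$ introduces an error of order $\epsilon\lambda^{N-1}$ that must be absorbed by modifying later digits without leaving $K_{\epsilon}$. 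Balancing the total accumulated carry error against the reservoir of available slack should yield an AP of length $\sim(1/\epsilon)/\log(1/\epsilon)$. The hardest step, parallel to the Schmidt-game construction of \cite{BFS}, will be to ensure inductively that no carry ever pushes the running term into a forbidden gap at any scale.
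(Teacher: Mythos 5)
A point of context first: the paper does not reprove this theorem in full. The quantitative bounds are quoted from \cite{BFS}; what the paper itself proves is the upper bound (Proposition \ref{3}, whose rescaling argument is essentially identical to your first paragraph, with $\alpha=\epsilon$ for $K_\epsilon$) and the ``moreover'' equivalence, which is the $n=2$ case of Theorem \ref{Main}. Your upper-bound argument and the deduction $\epsilon>1/3\Rightarrow L_{AP}(K_\epsilon)\le 3$ are correct and match the paper's route.

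The genuine gaps are in the other two parts. For $\epsilon<1/3\Rightarrow L_{AP}(K_\epsilon)\ge 4$, your explicit example only covers $\epsilon=1/3$; for $\epsilon<1/3$ the sentence ``reduces the existence of such an AP to finding two matched pairs \dots which can be produced from cylinder endpoints at suitable depth'' is precisely the missing content. If you write out, say, the symmetric pattern $(L,L,R,R)$, you are led to requiring two points of $K_\epsilon$ at prescribed mutual distances, e.g.\ $w\in K_\epsilon$ and $3w+\epsilon/\lambda\in K_\epsilon$ with $\lambda=(1-\epsilon)/2$; cylinder endpoints do not obviously supply such a pair, and the naive choice $\delta=\epsilon$ already fails. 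This is exactly where the paper works: it fixes codings $c_{2i-1}=(i-1)0\cdots$, $c_{2i}=(i-1)\beta\cdots$ with $\beta\approx\frac12\left[\frac1\lambda\right]$ and shows that the defect $2\beta\lambda^2-\lambda$ can be re-expanded as $\sum_{t\ge3}2d_t\lambda^t$ with bounded digits, the feasibility inequality $|2\beta\lambda^2-\lambda|\le 2(n-1)\lambda^3/(1-\lambda)$ holding precisely when $\lambda\ge\frac1{2n-1}$ (i.e.\ $\epsilon\le 1/3$ for $n=2$); alternatively one can invoke Newhouse's gap lemma, as Shmerkin observed. Your sketch contains neither mechanism, so the ``if'' direction is not established. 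Second, the lower bound $L_{AP}(K_\epsilon)\gtrsim\frac{1/\epsilon}{\log 1/\epsilon}$ is the hardest statement in the theorem, and your proposal only outlines a carry-absorption strategy whose decisive step (``ensure inductively that no carry ever pushes the running term into a forbidden gap at any scale'') is exactly the open difficulty; as stated it is a plan, not a proof. Since the paper itself defers this bound to \cite{BFS}, you would either need to import that argument or carry out your digit construction in full detail, including a quantitative bookkeeping of how much slack $N\approx\log(1/\epsilon)$ digits actually provide against an accumulated error of order $k\epsilon\lambda^{N-1}$ over $k$ steps.
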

\noindent In this paper, we shall generalize the second result of Theorem \ref{Motivation}. 

Before we state the main theorem, we introduce some definitions and results.
For $\mathbf{b}=\{b_{i}\}_{i=1}^{n}$ with $b_{1}<b_{2}<\cdots <b_{n},$ let
\begin{equation*}
E_{\lambda ,\mathbf{b}}=\bigcup\nolimits_{i=1}^{n}(\lambda E_{\lambda ,
\mathbf{b}}+b_{i})
\end{equation*}
denote the self-similar set with respect to the IFS $\{f_{i}(x)=\lambda
x+b_{i}\}_{i=1}^{n}, $ for the definition of self-similar sets, see \cite{FG,Hutchinson}. For a general self-similar set, its IFS may have overlaps, i.e. the IFS does not  satisfies the open set condition or strong separation condition (the definitions can be found in \cite{FG,Hutchinson}). However, in this case, 
it is easy to obtain the following result (the proof is in the next section).
\setcounter{Proposition}{1}
\begin{Proposition}\label{ssc}
If the strong separation condition fails for $E_{\lambda ,\mathbf{b}},$ then
there are three-term A.P.(arithmetic progression) contained in $E_{\lambda ,\mathbf{b}}$.
\end{Proposition}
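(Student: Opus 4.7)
The plan is to read off a three-term arithmetic progression directly from any overlap between two first-level cylinders. Failure of the strong separation condition means that there exist indices $i<j$ in $\{1,\dots,n\}$ together with points $x,y\in E_{\lambda,\mathbf{b}}$ satisfying $f_{i}(x)=f_{j}(y)$, i.e.\ $\lambda x+b_{i}=\lambda y+b_{j}$. From this equation I would extract the identity
$$\lambda(x-y)=b_{j}-b_{i}>0,$$
which in particular forces $x\neq y$.

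The second step is to exhibit the A.P.\ explicitly. Consider the three points
$$f_{i}(y)=\lambda y+b_{i},\qquad f_{j}(y)=\lambda y+b_{j},\qquad f_{j}(x)=\lambda x+b_{j},$$
all of which lie in $E_{\lambda,\mathbf{b}}$ because $f_{k}(E_{\lambda,\mathbf{b}})\subset E_{\lambda,\mathbf{b}}$ for each $k$. A direct check of consecutive differences gives
$$f_{j}(y)-f_{i}(y)=b_{j}-b_{i}=\lambda(x-y)=f_{j}(x)-f_{j}(y),$$
so the triple forms a genuine three-term A.P.\ with common difference $b_{j}-b_{i}>0$.

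There is essentially no obstacle: the argument uses only the invariance of $E_{\lambda,\mathbf{b}}$ under each similarity and the elementary identity that falls out of the overlap equation. The only point worth double-checking is that one really obtains three \emph{distinct} numbers, which is guaranteed by $b_{i}<b_{j}$ together with $x\neq y$; equivalently, one can use the symmetric triple $f_{i}(y),f_{i}(x),f_{j}(x)$, which works just as well.
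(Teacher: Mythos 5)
Your proposal is correct and is essentially the paper's own argument: from an overlap $f_i(x)=f_j(y)$ one forms the triple $f_i(y),\ f_i(x)=f_j(y),\ f_j(x)$, whose consecutive differences both equal $b_j-b_i=\lambda(x-y)$. Your write-up is in fact a touch cleaner, since it works with arbitrary indices $i<j$ and avoids the notational slip in the paper's proof (where $z=f_{i+1}(u)$ is mistakenly written as $y$).
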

 Due to this result, it is natural to consider  the arithmetic progressions in   self-similar sets with the strong separation condition. 
 
Note that $E_{\lambda ,\mathbf{b}}=\frac{b_{n}-b_{1}}{1-\lambda }E_{\lambda ,
\mathbf{\bar{b}}}+b_{1}\left( \sum\nolimits_{n=0}^{\infty }\lambda
^{n}\right) ,$ where $\mathbf{\bar{b}}=(\bar{b}_{1},\cdots ,\bar{b}_{n})$
satisfying $0=\bar{b}_{1}<\bar{b}_{2}<\cdots <\bar{b}_{n}=1-\lambda .$ For a
subset $F$ on the line, let $$L_{AP}(F)=\sup \{k:\exists \{c_{i}\}_{i=1}^{k}  \mbox{ is
an A.P. contained in } F\}.$$ Then  $L_{AP}(E_{\lambda ,\mathbf{b}})=L_{AP}(E_{\lambda ,
\mathbf{\bar{b}}}).$ Without loss of generality we may assume that
\begin{equation}
0=b_{1}<b_{2}<\cdots <b_{n}=1-\lambda .  \label{haha}
\end{equation}
In terms of the following result (the proof is available in the next section), we may assume that  $\mathbf{b}=\{b_{i}\}_{i=1}^{n}$ is an arithmetic progression. 
\begin{Proposition}\label{10}
Suppose there is no any A.P. in $\{b_{i}\}_{i=1}^{n}$ satisfying (\ref{haha})%
$.$ If $$\lambda <\left( \min_{i<j<k}|2b_{j}-b_{i}-b_{k}|\right) /2,$$ then
there is no A.P. in $E_{\lambda ,\mathbf{b}}$.
\end{Proposition}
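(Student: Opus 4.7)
I would argue by contradiction. Since any A.P.\ of length $\geq 3$ contains a three-term sub-A.P., it suffices to rule out three-term arithmetic progressions. Suppose $\{x,y,z\}\subset E_{\lambda,\mathbf{b}}$ is one such, with $x<y<z$ and common difference $\delta>0$. Using a coding of each point under the IFS,
\[
x=\sum_{k\geq 0}\lambda^{k}b_{a_{k}},\qquad y=\sum_{k\geq 0}\lambda^{k}b_{c_{k}},\qquad z=\sum_{k\geq 0}\lambda^{k}b_{d_{k}},
\]
the A.P.\ equation $x+z=2y$ becomes
\[
\sum_{k\geq 0}\lambda^{k}S_{k}=0,\qquad S_{k}:=b_{a_{k}}+b_{d_{k}}-2b_{c_{k}}.
\]

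A first key step is to observe that the no-A.P.\ hypothesis on $\{b_{i}\}$ forces $S_{k}=0$ to imply $a_{k}=c_{k}=d_{k}$: if the three indices were pairwise distinct then $\{b_{a_{k}},b_{c_{k}},b_{d_{k}}\}$ would itself form a three-term A.P.\ in $\{b_{i}\}$, and a partial coincidence such as $a_{k}=c_{k}$ reduces $S_{k}=0$ to $b_{d_{k}}=b_{a_{k}}$. Hence there is a smallest $k_{0}$ with $S_{k_{0}}\neq 0$, and $a_{k}=c_{k}=d_{k}$ for every $k<k_{0}$.

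I would then combine two estimates at $k_{0}$. The uniform bound $|S_{k}|\leq 2(1-\lambda)$ together with $\sum_{k}\lambda^{k}S_{k}=0$ yields the tail upper bound
\[
|S_{k_{0}}|\;\leq\;\sum_{k>k_{0}}\lambda^{k-k_{0}}|S_{k}|\;\leq\;2\lambda.
\]
For the matching lower bound, in the generic configuration where $a_{k_{0}},c_{k_{0}},d_{k_{0}}$ are pairwise distinct, the ordering $x<y<z$ together with the left-to-right ordering of cylinders implicit in~(\ref{haha}) forces $a_{k_{0}}<c_{k_{0}}<d_{k_{0}}$, and so
\[
|S_{k_{0}}|\;=\;|2b_{c_{k_{0}}}-b_{a_{k_{0}}}-b_{d_{k_{0}}}|\;\geq\;M\;:=\;\min_{i<j<k}|2b_{j}-b_{i}-b_{k}|.
\]
Comparing the two bounds gives $M\leq 2\lambda$, directly contradicting the hypothesis $\lambda<M/2$.

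The main obstacle I anticipate is handling the ``collision'' configurations at $k_{0}$, where exactly two of the three digits coincide and the lower bound via $M$ is not directly available. My plan there is to exploit that a collision forces two of $x,y,z$ to share an initial block of $k_{0}+1$ coding symbols, so their mutual difference is controlled by $\lambda^{k_{0}+1}$, while the third point is pulled across a positive cylinder gap of length at least $b_{d_{k_{0}}}-b_{a_{k_{0}}}-\lambda$; imposing the A.P.\ constraint $|y-x|=|z-y|$ and then iterating the coding argument one level deeper inside the shared cylinder should reduce each such case either to the generic distinct-digit situation treated above or to the degenerate case $x=y=z$, completing the proof.
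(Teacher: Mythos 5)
Your setup (reduction to three-term progressions, the coding identity $\sum_{k}\lambda^{k}S_{k}=0$, the fact that $S_{k}=0$ forces $a_{k}=c_{k}=d_{k}$ under the no-A.P.\ hypothesis, and the tail bound $|S_{k_{0}}|\leq 2\lambda$) is sound, and in the sub-case where $a_{k_{0}},c_{k_{0}},d_{k_{0}}$ are pairwise distinct with $c_{k_{0}}$ the middle index, your conclusion $M\leq 2\lambda$ is essentially the paper's argument, which runs it only at the first level: $x\in f_{i}(E_{\lambda,\mathbf{b}})$, $y\in f_{j}(E_{\lambda,\mathbf{b}})$, $z\in f_{k}(E_{\lambda,\mathbf{b}})$ give $|2y-x-z|\geq|2b_{j}-b_{i}-b_{k}|-2\lambda$, no coding needed. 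But two of your steps do not hold as stated. First, the claim that $x<y<z$ forces $a_{k_{0}}<c_{k_{0}}<d_{k_{0}}$ is unjustified: the hypothesis $\lambda<M/2$ does not prevent the first-level intervals $[b_{i},b_{i}+\lambda]$ from overlapping, so the coding is not monotone and the middle point need not carry the middle digit; if $c_{k_{0}}$ is extremal among the three indices, $|S_{k_{0}}|$ has the form $|2b_{j}-b_{i}-b_{k}|$ with the doubled index extremal, a quantity on which the hypothesis says nothing.

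Second, and fatally for the plan, the collision case cannot be iterated away, because under the stated hypotheses it genuinely produces progressions. Take $n=3$, $\lambda=1/5$, $\mathbf{b}=(0,1/10,4/5)$: condition (\ref{haha}) holds, $\{b_{i}\}$ contains no A.P., and $\min_{i<j<k}|2b_{j}-b_{i}-b_{k}|=3/5>2\lambda$, yet $x=f_{1}f_{2}(1)=3/50$, $y=f_{1}f_{3}(0)=4/25$, $z=f_{2}f_{3}(0)=13/50$ all lie in $E_{\lambda,\mathbf{b}}$ and form an A.P.\ with difference $1/10$. In your notation $k_{0}=0$ with $a_{0}=c_{0}=1\neq 2=d_{0}$, i.e.\ exactly a collision configuration (here $f_{1}(E_{\lambda,\mathbf{b}})\cap f_{2}(E_{\lambda,\mathbf{b}})\neq\emptyset$, compare Proposition \ref{ssc}), and no contradiction is available. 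So the case analysis cannot be completed for the statement as literally written; what is missing is an additional separation hypothesis, e.g.\ $2\lambda<\min_{i<j}(b_{j}-b_{i})$ (equivalently, extending the minimum to triples with repeated indices), under which the collision case dies at the first splitting level: two points sharing a cylinder force the common difference to be at most $\lambda$ times the scale, while the third point sits across a gap exceeding $\lambda$ times the scale. Note that the paper's own one-line proof tacitly assumes the three points occupy three distinct first-level cylinders, so it silently skips precisely the case you tried to treat; your attempt makes that lacuna visible but does not (and cannot, without a strengthened hypothesis) close it.
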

Given an A.P.  $\mathbf{b}=\{b_{i}\}_{i=1}^{n}$ and a ratio $\lambda \in
(0,1/n),$ we obtain a self-similar set $E_{\lambda ,\mathbf{b}}$ satisfying
the strong separation condition. As the discussion above, we assume that
\begin{equation}
b_{i}=(i-1)(\lambda +\alpha )\text{ with }\alpha =\frac{1-n\lambda }{n-1}.
\label{a}
\end{equation}
Now let $E_{\lambda }^{n}=E_{\lambda ,\mathbf{b}}$ where $\{b_{i}\}_{i=1}^{n}
$ is defined in (\ref{a}). In particular, for $n=2$, the self-similar set $%
E_{\lambda }^{n}$ is the middle-$\alpha $ Cantor set with $\alpha
=1-2\lambda .$ In this paper, we shall investigate the arithmetic progressions in the attractor $E_{\lambda }^{n}.$

Note that there is a natural A.P. $\{b_{i}\}_{i=1}^{n}$ in $E_{\lambda }^{n},
$ i.e., $L_{AP}(E_{\lambda }^{n})\geq n.$ An important problem is
\begin{center}
how about the estimate of $L_{AP}(E_{\lambda }^{n})?$
\end{center}
Now we state the main result of this paper.  
\setcounter{Theorem}{3}
\begin{Theorem}\label{Main}
\label{Main copy(1)} Consider the self-similar set $E_{\lambda }^{n}$, the
followings are equivalent,

$(1)$ $L_{AP}(E_{\lambda }^{n})\geq n+1;$

$(2)$ $\lambda \geq \frac{1}{2n-1};$

$(3)$ $L_{AP}(E_{\lambda }^{n})\geq 2n.$ \newline
In particular, $L_{AP}(E_{\lambda }^{n})=n$ if and only if $\lambda \in (0,\frac{1}{%
2n-1}).$
\end{Theorem}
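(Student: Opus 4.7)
The plan is to run the cycle $(2)\Rightarrow(3)\Rightarrow(1)\Rightarrow(2)$; the implication $(3)\Rightarrow(1)$ is immediate from $2n\geq n+1$, and the ``in particular'' clause will follow by combining $(1)\Leftrightarrow(2)$ with the trivial lower bound $L_{AP}(E_\lambda^n)\geq n$ furnished by the natural A.P.\ $\{b_i\}_{i=1}^n$.

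For $(1)\Rightarrow(2)$ I would argue by contrapositive. Assume $\lambda<1/(2n-1)$, equivalently $\alpha>\lambda$, and take any A.P.\ $\{x_0,\dots,x_N\}\subset E_\lambda^n$ with common difference $d$. Each $x_k$ belongs to some cylinder $C_i=[b_i,b_i+\lambda]$. Two A.P.\ points in the same $C_i$ force $d\leq\lambda$, while any consecutive pair of A.P.\ points lying in distinct cylinders forces $d\geq\alpha$, since the jump must clear a gap of width at least $\alpha$. Under $\alpha>\lambda$ these bounds are incompatible, so the A.P.\ either sits entirely inside a single $C_i$ or places at most one point in each cylinder; the latter yields $N+1\leq n$ directly. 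In the former case I pull the A.P.\ back via $f_i^{-1}$ to obtain an A.P.\ of the same length in $E_\lambda^n$ with rescaled difference $d/\lambda$; since $\lambda<1$, finitely many such iterations push the rescaled difference past $\lambda$, returning us to the ``at most one per cylinder'' situation.

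For $(2)\Rightarrow(3)$, the heart of the argument is the identity
\[
E_\lambda^n-E_\lambda^n=[-1,1]\qquad\text{whenever }\lambda\geq 1/(2n-1).
\]
To establish this I would observe that $F:=E_\lambda^n-E_\lambda^n$ satisfies $F=\bigcup_{i,j=1}^n\bigl(\lambda F+(b_i-b_j)\bigr)$, so $F$ is the unique non-empty compact attractor of the (overlapping) IFS $\{T_{i,j}(x)=\lambda x+(b_i-b_j)\}$. The shifts $b_i-b_j$ take the $2n-1$ values $k(\lambda+\alpha)$ for $k=-(n-1),\dots,n-1$, and each $T_{i,j}$ maps $[-1,1]$ into $[-1,1]$ (using $(n-1)(\lambda+\alpha)=1-\lambda$). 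Consecutive images $T_{i,j}([-1,1])$ have centers differing by $\lambda+\alpha$ and each has half-width $\lambda$, so they overlap precisely when $\lambda+\alpha\leq 2\lambda$, i.e.\ when $\alpha\leq\lambda$; under (2) this is the case, so $\bigcup_{i,j}T_{i,j}([-1,1])=[-1,1]$. Thus $[-1,1]$ itself is a non-empty compact fixed point of the Hutchinson operator, and by uniqueness of the attractor $F=[-1,1]$.

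Granted the lemma, I set $\eta:=(\lambda+\alpha)/(2\lambda)\in[1/2,1]$, use $\eta\in E_\lambda^n-E_\lambda^n$ to pick $p,q\in E_\lambda^n$ with $q-p=\eta$ (so $p\in[0,1-\eta]$ automatically), and define
\[
A:=\{f_i(p),\,f_i(q):1\leq i\leq n\}=\{b_i+\lambda p,\,b_i+\lambda q:1\leq i\leq n\}\subset E_\lambda^n.
\]
Listed in increasing order, the within-cylinder jumps $f_i(q)-f_i(p)$ and the between-cylinder jumps $f_{i+1}(p)-f_i(q)=(b_{i+1}-b_i)-\lambda(q-p)$ both equal $(\lambda+\alpha)/2$, so $A$ is an A.P.\ of length $2n$ contained in $E_\lambda^n$, yielding $L_{AP}(E_\lambda^n)\geq 2n$. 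The main obstacle is the difference-set lemma: the first-generation overlap computation is routine, but one still has to invoke uniqueness of the attractor of this genuinely \emph{overlapping} IFS in order to conclude that $F$ fills all of $[-1,1]$ rather than some proper subset.
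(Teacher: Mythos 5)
Your proposal is correct, but for the central implication it follows a genuinely different route from the paper. The easy parts coincide: $(3)\Rightarrow(1)$ is trivial in both, and your contrapositive cylinder dichotomy for $(1)\Rightarrow(2)$ (two points in one cylinder force $d\le\lambda$, a jump between cylinders forces $d\ge\alpha$, rescale by $f_i^{-1}$ while the A.P.\ stays in one cylinder) is the same mechanism as the paper's combination of its Proposition on common differences $d\ge\alpha$ with the pigeonhole bound $d\le\lambda$. For $(2)\Rightarrow(3)$, however, the paper works in the language of $\beta$-expansions: it prescribes codings $c_{2i-1}=(i-1)0\cdots$ and $c_{2i}=(i-1)\beta\cdots$ with $\beta\approx\frac{1}{2\lambda}$, reduces the A.P.\ equations to expanding the single number $2\beta\lambda^{2}-\lambda$ as $\sum_{t\ge3}2d_t\lambda^{t}$ with digits $d_t\in\{-(n-1),\dots,n-1\}$, and proves that expansion exists by a size estimate together with the overlap of the digit intervals $J_i$, which is exactly the condition $\lambda\ge\frac{1}{2n-1}$. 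You instead prove the difference-set lemma $E_\lambda^n-E_\lambda^n=[-1,1]$ by observing that the difference set is a nonempty compact fixed set of the overlapping IFS $\{\lambda x+k(\lambda+\alpha)\}_{|k|\le n-1}$ and that $[-1,1]$ is also fixed by its Hutchinson operator precisely when $\alpha\le\lambda$, so uniqueness of the attractor forces equality; you then place two equally spaced points $f_i(p),f_i(q)$ with $q-p=\frac{\lambda+\alpha}{2\lambda}$ in each first-level cylinder, getting common difference $\frac{\lambda+\alpha}{2}$ in closed form. The two arguments hinge on the same overlap inequality (the paper's overlapping $J_i$ are your covering of $[-1,1]$ by the images $T_{i,j}([-1,1])$), but your packaging via attractor uniqueness avoids the explicit digit-by-digit construction, while the paper's coding proof is more constructive (it exhibits the expansions) and is closer in spirit to the ``gap lemma''-free, $\beta$-expansion theme the authors emphasize; indeed your difference-set route is essentially the alternative the paper's Remark alludes to and deliberately sidesteps.
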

\setcounter{Remark}{4}
\begin{Remark}
In \cite{BFS}, Shmerkin pointed that $L_{AP}(K_{\epsilon})\geq 4$ if and only if $0<\epsilon\leq 1/3$ can be proved by the gap lemma. We, however, will use some  basic ideas from $\beta$-expansions \cite{GS,MK,DJKL,KKLL2017,DajaniDeVrie} to find the arithmetic progressions. Moreover, our proof is  constructive. We note  that  finding the arithmetic progressions in $E_{\lambda }^{n}$ is essentially a problem in the setting of $\beta$-expansions, i.e. given a point in some interval, then how can we find its expansions in base $1/\lambda.$ Nevertheless, for the multiple  $\beta$-expansions, to the best of our knowledge, there are few results \cite{DJKL,KarmaKan2}.  This is the main reason which makes  the constructive proof  difficult. 
\end{Remark}
This paper is arranged as follows. In section 2, we give a proof of Theorem \ref{Main}. Moreover, we also prove other useful results which estimate the upper and lower bounds of $L_{AP}(E_{\lambda }^{n}).$ In section 3, we pose one problem.
\section{\textbf{Proof of Theorem \ref{Main}}}
Before we prove Theorem \ref{Main}, we prove some results concerning with the lower and upper bound of $l(E_{\lambda }^{n}).$
\begin{proof}[\textbf{Proof of Proposition \ref{ssc}}]
Suppose $f_{i}(E_{\lambda ,\mathbf{b}})\cap f_{i+1}(E_{\lambda ,\mathbf{b}})\neq
\emptyset $ with $b_{i}<b_{i+1},$ we take a point $y=f_{i}(u)=f_{i+1}(v)$
with $u,v\in E_{\lambda ,\mathbf{b}}$ in this intersection and let $%
x=f_{i}(v)$ and $y=f_{i+1}(u).$ Then $x-y=y-z,$ that means $%
\{x,y,z\}(\subset E_{\lambda ,\mathbf{b}})$ is an A.P.
\end{proof}
\begin{proof}[\textbf{Proof of Proposition \ref{10}}]
Suppose on the contrary that there exists an A.P. $x<y<z$ contained in $%
E_{\lambda ,\mathbf{b}}.$ Let $x\in f_{i}(E_{\lambda ,\mathbf{b}}),$ $y\in
f_{j}(E_{\lambda ,\mathbf{b}})$ and $z\in f_{k}(E_{\lambda ,\mathbf{b}}),$
then $x\in \lbrack b_{i},b_{i}+\lambda ],$ $y\in \lbrack b_{j},b_{j}+\lambda
]$ and $z\in \lbrack b_{k},b_{k}+\lambda ]$ which implies $|2y-x-z|\geq
|2b_{j}-b_{i}-b_{k}|-2\lambda >0.$ It is a contradiction.
\end{proof}
For the upper bound of $L_{AP}(E_{\lambda }^{n}),$ using the
self-similarity, we have
\begin{Proposition}\label{3}
\label{common}For any $k\leq L_{AP}(E_{\lambda }^{n}),$ we have an A.P. of length
$k$ with common difference $d\geq \alpha .$ As a result,
\begin{equation*}
L_{AP}(E_{\lambda }^{n})\leq \left[ \frac{1}{\alpha }\right] +1=\left[ \frac{%
1-n\lambda }{n-1}\right] +1.
\end{equation*}
\end{Proposition}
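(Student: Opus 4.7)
The plan is to use self-similarity to reduce to the case of an A.P. whose common difference is at least $\alpha$, and then derive the numerical bound from the diameter of $E_\lambda^n \subset [0,1]$.

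First I would observe the key geometric fact: the first-level cylinders $f_i([0,1]) = [b_i, b_i + \lambda]$ are mutually disjoint, and the gap between consecutive cylinders is exactly $b_{i+1} - (b_i + \lambda) = (\lambda + \alpha) - \lambda = \alpha$. Consequently, if two points $x < y$ of $E_\lambda^n$ lie in distinct cylinders, say $x \in f_i(E_\lambda^n)$ and $y \in f_j(E_\lambda^n)$ with $j > i$, then $y - x \geq b_{i+1} - (b_i + \lambda) = \alpha$.

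Next, starting from any A.P. $\{c_1, c_2, \ldots, c_k\} \subset E_\lambda^n$ of common difference $d$, I would run the following rescaling process. If every $c_j$ lies in a single cylinder $f_{i_0}(E_\lambda^n)$, then $\{f_{i_0}^{-1}(c_j)\}_{j=1}^{k}$ is again an A.P. in $E_\lambda^n$, of the same length $k$ but with common difference $d/\lambda$. Iterate. After $m$ steps the common difference becomes $d/\lambda^m$, which grows without bound since $\lambda < 1$. But the A.P. lives in $[0,1]$, so $(k-1)d/\lambda^m \leq 1$, forcing termination: at some step $m_0$ the rescaled A.P. is no longer contained in a single first-level cylinder. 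By the gap observation above, its common difference $d/\lambda^{m_0}$ must then be at least $\alpha$. This proves the first assertion.

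For the numerical bound, apply this to any A.P. of maximal length: there exists an A.P. of length $k \leq L_{AP}(E_\lambda^n)$ inside $[0,1]$ with common difference $d \geq \alpha$. Hence $(k-1)\alpha \leq (k-1)d \leq 1$, so $k \leq 1/\alpha + 1$, and since $k$ is an integer, $k \leq [1/\alpha] + 1 = \left[(1-n\lambda)/(n-1)\right] + 1$.

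The only slightly delicate point is the termination of the rescaling procedure; once that is secured by comparing $d/\lambda^m$ with $1/(k-1)$, the rest is routine. No separation hypothesis beyond the geometric gap $\alpha$ is used, and the argument relies only on the self-similarity of $E_\lambda^n$ under each $f_i$.
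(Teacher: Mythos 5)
Your proof is correct and follows essentially the same route as the paper: both rescale the given A.P. to the smallest cylinder containing it (you do this by iterating one level at a time with a diameter-based termination argument, the paper by taking the minimal basic interval directly) and then use the gap $\alpha$ between first-level cylinders to force the common difference to be at least $\alpha$, after which the bound $(k-1)\alpha\leq 1$ gives the conclusion. The only point worth spelling out, in your write-up as in the paper's, is that once the rescaled A.P. meets two distinct first-level cylinders, some \emph{consecutive} pair of its terms lies in distinct cylinders, so the common difference itself (not just some difference) is at least $\alpha$.
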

\setcounter{Remark}{1}
\begin{Remark}
This proposition was also proved in \cite{BFS}.
\end{Remark}
We also have another estimate of upper bound of $L_{AP}(E_{\lambda }^{n}).$
\setcounter{Proposition}{2}
\begin{Proposition}\label{4}
Let $\lambda _{n,m}\in (0,1)$ be the solution of $1=nx+(n-1)x^{m}.$ Then $
L_{AP}(E_{\lambda }^{n})\leq n^{m}$ if $\lambda \leq \lambda _{n,m}.$ In
particular, for $n=2,$ we obtain that $L_{AP}(E_{\lambda }^{2})\leq 2^{m}$ if $
\lambda \leq \lambda _{2,m}.$ Subsequently, we have that if $1/3\leq \lambda<\sqrt{2}-1$, then $L_{AP}(E_{\lambda }^{2})=4.$
\end{Proposition}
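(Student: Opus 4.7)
The plan is to couple Proposition \ref{3} with a pigeonhole on the level-$m$ cylinder decomposition of $E_\lambda^n$: there are $n^m$ such cylinders, each of length exactly $\lambda^m$, and their union contains $E_\lambda^n$.

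The first step is to translate the hypothesis into the single inequality $\alpha > \lambda^m$. Since $x \mapsto nx + (n-1)x^m$ is strictly increasing on $(0,1)$ and equals $1$ at $x = \lambda_{n,m}$, the strict inequality $\lambda < \lambda_{n,m}$ gives $n\lambda + (n-1)\lambda^m < 1$, which rearranges directly to $\alpha = (1-n\lambda)/(n-1) > \lambda^m$.

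Next, apply Proposition \ref{3} at $k = L_{AP}(E_\lambda^n)$: there is an A.P.\ of length $k$ inside $E_\lambda^n$ with common difference $d \geq \alpha > \lambda^m$. Two A.P.\ points lying in the same level-$m$ cylinder would be at distance at most $\lambda^m < d$, which is impossible; hence each cylinder contains at most one A.P.\ point, forcing $k \leq n^m$. The $n = 2$ specialization is immediate, and for the numerical consequence $\lambda_{2,2}$ is the positive root of $x^2 + 2x = 1$, namely $\sqrt{2}-1$, so the bound reads $L_{AP}(E_\lambda^2) \leq 4$ for $\lambda < \sqrt{2}-1$. Theorem \ref{Main} with $n=2$ supplies the matching lower bound $L_{AP}(E_\lambda^2) \geq 4$ as soon as $\lambda \geq 1/3$, and the two bounds together yield $L_{AP}(E_\lambda^2) = 4$ on $1/3 \leq \lambda < \sqrt{2}-1$.

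The only delicate issue --- and the reason the corollary is stated with the strict upper bound $\lambda < \sqrt{2}-1$ --- is the boundary $\lambda = \lambda_{n,m}$. There $\alpha = \lambda^m$, Proposition \ref{3} no longer yields $d > \lambda^m$, and two A.P.\ points can legitimately occupy opposite endpoints of a single cylinder; the pigeonhole then gives only $k \leq 2n^m$. Indeed at $n = 2$, $m = 2$ the five cylinder endpoints $\{3\sqrt{2}-4,\ \sqrt{2}-1,\ 2-\sqrt{2},\ 5-3\sqrt{2},\ 8-5\sqrt{2}\}$ form an A.P.\ in $E^2_{\sqrt{2}-1}$ with common difference $\lambda^2 = 3-2\sqrt{2}$, so the hypothesis of Proposition \ref{4} has to be read as the strict inequality $\lambda < \lambda_{n,m}$.
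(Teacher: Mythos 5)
Your argument is correct and is essentially the paper's own proof: Proposition \ref{3} forces the common difference to be at least $\alpha$, a pigeonhole on the $n^{m}$ level-$m$ basic intervals (each of length $\lambda^{m}<\alpha$ when $\lambda<\lambda_{n,m}$) then bounds the length by $n^{m}$, and Theorem \ref{Main} (or Theorem \ref{Motivation}) supplies the matching lower bound $4$ for $\lambda\geq 1/3$. Your boundary remark is a genuine improvement rather than a redundancy: the paper's displayed inequality $\alpha<\lambda^{m}$ has its direction reversed (it should read $\alpha\geq\lambda^{m}$, with equality exactly at $\lambda=\lambda_{n,m}$), and your explicit five-term progression $\left\{\lambda-\lambda^{2},\ \lambda,\ 1-\lambda,\ 1-\lambda+\lambda^{2},\ 1-\lambda^{3}\right\}$ at $\lambda=\sqrt{2}-1$, with common difference $\lambda^{2}=3-2\sqrt{2}$, shows that the hypothesis must indeed be read as the strict inequality $\lambda<\lambda_{n,m}$, which is exactly how the final claim $1/3\leq\lambda<\sqrt{2}-1$ uses it.
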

\begin{proof}[\textbf{Proof of Proposition \ref{3}}]
Let $E_{\lambda }^{n}$ be the invariant set of the IFS $$\{S_{i}(x)=\lambda
x+(i-1)(\lambda +\alpha )\}_{i=1}^{n}.$$
Suppose that $\{c_{i}\}_{i=1}^{k}\subset E_{\lambda }^{n}$ is an A.P., and $%
I_{i_{1}\cdots i_{t}}=S_{i_{1}}\cdots S_{i_{t}}([0,1])$ is the smallest
basic interval containing $\{c_{i}\}_{i=1}^{k}.$ Hence there exists two
different $i_{t+1},i_{t+1}^{\prime }$ such that
\begin{equation*}
I_{i_{1}\cdots i_{t}i_{t+1}}\cap \{c_{i}\}_{i=1}^{k}\neq \emptyset \text{
and }I_{i_{1}\cdots i_{t}i_{t+1}^{\prime }}\cap \{c_{i}\}_{i=1}^{k}\neq
\emptyset .
\end{equation*}%
Then $\{S_{i_{1}\cdots i_{t}}^{-1}(c_{i})\}_{i=1}^{k}$ is also an A.P. in $%
E_{\lambda }^{n}$ with common difference equal or greater than
\begin{equation*}
\text{d}(I_{i_{t+1}},I_{i_{t+1}^{\prime }})\geq \alpha .
\end{equation*}
\end{proof}
\begin{proof}[\textbf{Proof of Proposition \ref{4}}]
Suppose on the contrary that $L_{AP}(E_{\lambda }^{n})> n^{m}.$ By pigeonhole
principle, there are two point of the A.P. lying in a basic interval of
length $\lambda ^{m}.$

On the other hand, in the same way as above, the common difference is equal
or greater than
\begin{equation*}
\alpha =\frac{1-n\lambda }{n-1}<\lambda ^{m}
\end{equation*}
if $\lambda \leq \lambda _{n,m}.$ Hence we obtain a contradiction.
Combining  with  Theorem \ref{Motivation} or Theorem \ref{Main}, we obtain the last statement. 
\end{proof}
 Now we give a proof of Theorem \ref{Main}.

\textbf{Step 1}. $(3)\Longrightarrow (1).$

It is obvious.

\medskip

\textbf{Step 2}. $(1)\Longrightarrow (2).$

Using Proposition \ref{common}, we have $d\geq \alpha =\frac{1-n\lambda }{n-1%
}.$ On the other hand, by pigeonhole principle, there are two points in the
A.P. contained in a same basic interval with length $\lambda ,$ then
\begin{equation*}
\lambda \geq d\geq \frac{1-n\lambda }{n-1}.
\end{equation*}%
Hence $\lambda \geq \frac{1}{2n-1}.$

\medskip

\textbf{Step 3}. $(2)\Longrightarrow (3).$

We will construct an A.P. $\{c_{i}\}_{i=1}^{2n}$ such that $
\{c_{i}\}_{i=1}^{2n}\subset E_{\lambda }^{n}.$ Let
\begin{equation*}
k=\left[ \frac{1}{\lambda }\right] =\frac{1}{\lambda }-\left( \frac{1}{
\lambda }\right) .
\end{equation*}
where $\left[ x\right] $ and $\left( x\right) $ are integral and decimal
part of $x\geq 0$. Denote
\begin{equation*}
\beta =\left\{
\begin{array}{cc}
\frac{k}{2} & \text{if }k\text{ is even,} \\
\frac{k+1}{2} & \text{if }k\text{ is odd.}%
\end{array}%
\right.
\end{equation*}
Note that the invariant  set of   the IFS $\{S_{i}x=\lambda x+i(\lambda +\alpha )\}_{i=0}^{n-1}$   can be represented as follows
$$ E_{\lambda}^{n}=\left\{(\lambda +\alpha )\sum\nolimits_{t=1}^{\infty }a_{t}\lambda ^{t}:a_t\in \{0,1,\cdots,n-1\}\right\}.$$
In other words, if  $x\in E_{\lambda}^{n}$, then   $$x=(\lambda +\alpha )\sum\nolimits_{t=1}^{\infty }a_{t}\lambda ^{t}$$ for some $(a_t)\in \{0,1,\cdots,n-1\}^{\mathbb{N}}$. We call $(a_t)$ a coding of $x$.  For simplicity, we denote
$$x=a_1a_2a_3\cdots.$$
Suppose $\{c_{i}\}_{i=1}^{2n}\subset E_{\lambda}^{n}$ have codings w.r.t. the IFS as follows%
\begin{equation*}
c_{2i-1}=(i-1)0c_{2i-1}^{(3)}c_{2i-1}^{(4)}c_{2i-1}^{(5)}\cdots \text{ and }%
c_{2i}=(i-1)\beta c_{2i}^{(3)}c_{2i}^{(4)}c_{2i}^{(5)}\cdots \text{ for }%
i=1,\cdots ,n.
\end{equation*}%
That means
\begin{equation*}
c_{i}=(\lambda +\alpha )\sum\nolimits_{t=1}^{\infty }c_{i}^{(t)}\lambda ^{t}%
\text{ with }c_{2i-1}^{(1)}=i-1,c_{2i-1}^{(2)}=0\text{ and }%
c_{2i}^{(1)}=i-1,c_{2i}^{(2)}=\beta ,
\end{equation*}%
where
\begin{equation*}
c_{i}^{(t)}\in \{0,1,\cdots ,(n-1)\}.
\end{equation*}
To insure that $\{c_{i}\}_{i=1}^{2n}$ is an A.P., we only need to show that
\begin{equation}
\left\{
\begin{array}{c}
2c_{2}=c_{1}+c_{3}, \\
2c_{3}=c_{2}+c_{4}, \\
\cdots \\
2c_{2n-1}=c_{2n}+c_{2n-2}.
\end{array}
\right.  \label{0}
\end{equation}
In fact, for the equation $2c_{2i}=c_{2i-1}+c_{2i+1}$ we obtain that
\begin{equation}
2\beta \lambda ^{2}-\lambda =\sum\nolimits_{t=3}^{\infty }\left(
c_{2i-1}^{(t)}+c_{2i+1}^{(t)}-2c_{2i}^{(t)}\right) \lambda ^{t}.  \label{1}
\end{equation}
In the same way, for $2c_{2i+1}=c_{2i}+c_{2i+2},$ we have%
\begin{equation*}
\lambda -2\beta \lambda ^{2}=\sum\nolimits_{t=3}^{\infty }\left(
c_{2i}^{(t)}+c_{2i+2}^{(t)}-2c_{2i+1}^{(t)}\right) \lambda ^{t},
\end{equation*}
which implies
\begin{equation}
2\beta \lambda ^{2}-\lambda =\sum\nolimits_{t=3}^{\infty }\left(
2c_{2i+1}^{(t)}-c_{2i}^{(t)}-c_{2i+2}^{(t)}\right) \lambda ^{t}.  \label{2}
\end{equation}
\begin{Claim}
There is a sequence $\{d_{t}\}_{t=3}^{\infty }$ of integers such that
\begin{equation*}
2\beta \lambda ^{2}-\lambda =\sum\nolimits_{t=3}^{\infty }(2d_{t})\lambda
^{t},
\end{equation*}
where $d_{t}\in \{-(n-1),\cdots ,-1,0,1,\cdots ,(n-1)\}.$
\end{Claim}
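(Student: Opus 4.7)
The plan is to recast the claim as saying that $2\beta\lambda^2-\lambda$ lies in the representable set
$$\mathcal{A}=\left\{\sum_{t=3}^\infty 2d_t\lambda^t : d_t\in\{-(n-1),\dots,n-1\}\right\},$$
and then to identify $\mathcal{A}$ under the hypothesis $\lambda\geq 1/(2n-1)$ with the full closed interval $[-2(n-1)\lambda^3/(1-\lambda),\,2(n-1)\lambda^3/(1-\lambda)]$. This reduces the task to (a) bounding $|2\beta\lambda^2-\lambda|$, and (b) verifying that $\mathcal{A}$ really is this entire interval.

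For (a), I would write $1/\lambda=k+f$ with $k=[1/\lambda]$ and $f=(1/\lambda)\in[0,1)$, so that $k\lambda=1-f\lambda$. When $k$ is even, $\beta=k/2$ gives $2\beta\lambda^2-\lambda=k\lambda^2-\lambda=-f\lambda^2$; when $k$ is odd, $\beta=(k+1)/2$ gives $2\beta\lambda^2-\lambda=(1-f)\lambda^2$. In both cases $|2\beta\lambda^2-\lambda|\leq \lambda^2$. For (b), observe that $\mathcal{A}=\lambda^3\mathcal{B}$, where $\mathcal{B}$ is the attractor of the IFS $\{g_d(x)=\lambda x+2d\}_{d=-(n-1)}^{n-1}$. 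The extreme points of $\mathcal{B}$ are $\pm M$ with $M=2(n-1)/(1-\lambda)$; the images $g_d([-M,M])=[2d-\lambda M,\,2d+\lambda M]$ are symmetric intervals of equal length centered at consecutive even integers, and consecutive ones overlap iff $\lambda M\geq 1$, i.e. iff $\lambda\geq 1/(2n-1)$. Under this hypothesis $\bigcup_d g_d([-M,M])=[-M,M]$, so iterating the self-similarity gives $\mathcal{B}=[-M,M]$, hence $\mathcal{A}=[-2(n-1)\lambda^3/(1-\lambda),\,2(n-1)\lambda^3/(1-\lambda)]$.

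Combining (a) and (b), it then remains to check the inequality $\lambda^2\leq 2(n-1)\lambda^3/(1-\lambda)$, which is once more equivalent to $\lambda\geq 1/(2n-1)$, so $2\beta\lambda^2-\lambda\in\mathcal{A}$ and the required sequence exists. The main obstacle is step (b), the covering property of the digit-set IFS: the three occurrences of the inequality (bound on the target, overlap of the IFS pieces, and sufficiency of the representable length) must all line up at exactly the threshold $1/(2n-1)$. A more constructive alternative would be a greedy $\beta$-expansion procedure: set $z_0=(2\beta\lambda^2-\lambda)/\lambda^3$, and iteratively choose $d_{k+3}\in\{-(n-1),\dots,n-1\}$ to be an integer nearest to $z_k/2$, with $z_{k+1}=(z_k-2d_{k+3})/\lambda$; the invariant $|z_k|\leq 1/\lambda\leq 2n-1$ then propagates precisely when $\lambda\geq 1/(2n-1)$, which yields the desired digit sequence explicitly.
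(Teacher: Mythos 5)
Your proposal is correct and follows essentially the same route as the paper: bound $|2\beta\lambda^{2}-\lambda|$ and then use the overlap of consecutive digit intervals at $\lambda\geq\frac{1}{2n-1}$ to conclude that the set of representable sums $\sum_{t\geq 3}2d_{t}\lambda^{t}$ is the whole interval $\bigl[-\tfrac{2(n-1)\lambda^{3}}{1-\lambda},\tfrac{2(n-1)\lambda^{3}}{1-\lambda}\bigr]$, which contains the target. Your exact evaluation $2\beta\lambda^{2}-\lambda=-f\lambda^{2}$ (resp. $(1-f)\lambda^{2}$), giving the clean bound $\lambda^{2}$, is a minor simplification of the paper's two-case estimate, and your attractor/greedy justification of the covering step is just a more explicit version of the paper's verification that $J_{i}\cap J_{i+1}\neq\emptyset$.
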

(1) We first verify that
\begin{equation*}
|2\beta \lambda ^{2}-\lambda |\leq 2(n-1)\frac{\lambda ^{3}}{1-\lambda }.
\end{equation*}
\textbf{Case 1.} When $k$ is even, we shall check that
\begin{equation*}
0\geq 2\beta \lambda -1\geq -2(n-1)\frac{\lambda ^{2}}{1-\lambda },
\end{equation*}
where $2\beta =\frac{1}{\lambda }-\left( \frac{1}{\lambda }\right) .$ In
fact, since $\left( \frac{1}{\lambda }\right) <1$ and $\lambda \geq \frac{1}{
2n-1},$ we have
\begin{eqnarray*}
2(n-1)\frac{\lambda ^{2}}{1-\lambda }+2\beta \lambda -1 &=&-\left( \frac{1}{%
\lambda }\right) \lambda +2(n-1)\frac{\lambda ^{2}}{1-\lambda } \\
&\geq &\frac{\lambda }{1-\lambda }\left( (2n-1)\lambda -1\right) \geq 0.
\end{eqnarray*}

\textbf{Case 2.} When $k$ is odd, we need to show that%
\begin{equation*}
0\leq 2\beta \lambda -1\leq 2(n-1)\frac{\lambda ^{2}}{1-\lambda },
\end{equation*}%
where $2\beta =\frac{1}{\lambda }-\left( \frac{1}{\lambda }\right) +1.$ In
fact, since $\left( \frac{1}{\lambda }\right) \geq 0$ and $\lambda \geq
\frac{1}{2n-1},$ we have%
\begin{eqnarray*}
&&2(n-1)\frac{\lambda ^{2}}{1-\lambda }-2\beta \lambda +1 \\
&=&2(n-1)\frac{\lambda ^{2}}{1-\lambda }-(\frac{1}{\lambda }-\left( \frac{1}{%
\lambda }\right) +1)\lambda +1 \\
&\geq &2(n-1)\frac{\lambda ^{2}}{1-\lambda }-\lambda =\frac{\lambda }{%
1-\lambda }\left( (2n-1)\lambda -1\right) \geq 0.
\end{eqnarray*}

(2) It suffices to verify
\begin{equation*}
J_{i}\cap J_{i+1}\neq \emptyset ,\text{ }
\end{equation*}
where $i\in \{-(n-1),\cdots ,-1,0,1,\cdots ,(n-2)\}$ and
\begin{equation*}
J_{i}=2\left[i\lambda +\frac{-(n-1)\lambda ^{2}}{1-\lambda },i\lambda +\frac{%
(n-1)\lambda ^{2}}{1-\lambda }\right].
\end{equation*}%
In fact, we only need to check that $\lambda +\frac{-(n-1)\lambda ^{2}}{%
1-\lambda }\leq \frac{(n-1)\lambda ^{2}}{1-\lambda },$ i.e.,
\begin{equation*}
\frac{1}{1-\lambda }\left( \lambda (2n-1)-1\right) \geq 0
\end{equation*}%
due to $\lambda \geq \frac{1}{2n-1}.$

Now, suppose
\begin{equation*}
2\beta \lambda ^{2}-\lambda =\sum\nolimits_{t=3}^{\infty }(2d_{t})\lambda
^{t}
\end{equation*}%
with $d_{t}\in \{-(n-1),\cdots ,-1,0,1,\cdots ,(n-1)\}.$

(1) When $2d_{t}\geq 0$ for $t\geq 3,$ let
\begin{equation*}
c_{i}^{(t)}=\left\{
\begin{array}{cc}
d_{t} & \text{if }i\text{ is odd,} \\
0 & \text{if }i\text{ is even.}%
\end{array}%
\right.
\end{equation*}

(2) When $2d_{t}<0$ for $t\geq 3,$ let
\begin{equation*}
c_{i}^{(t)}=\left\{
\begin{array}{cc}
0 & \text{if }i\text{ is odd,} \\
-d_{t} & \text{if }i\text{ is even.}%
\end{array}%
\right.
\end{equation*}%
Then by (\ref{1})-(\ref{2}), for
\begin{equation*}
c_{i}=(\lambda +\alpha )\sum\nolimits_{t=1}^{\infty }c_{i}^{(t)}\lambda ^{t}%
\text{ with }c_{2i-1}^{(1)}=i-1,c_{2i-1}^{(2)}=0\text{ and }%
c_{2i}^{(1)}=i-1,c_{2i}^{(2)}=\beta ,
\end{equation*}
equations (\ref{0}) hold. The step $(2)\Longrightarrow (3)$ is finished$.$
\section{One problem}
We pose the following question. 
\begin{Question}
Whether 
 $F(\lambda)=L_{AP}(E_{\lambda }^{2})$ is an increasing  staircase function with respect to $\lambda$. 
\end{Question}

\section*{Acknowledgements}
The work is supported by National Natural Science Foundation of China (Nos.
11831007, 11771226, 11701302, 11371329, 11471124, 11671147). The work is
also supported by K.C. Wong Magna Fund in Ningbo University.


\begin{thebibliography}{10}

\bibitem{BFS}
Lior Broderick, Ryan~Fishman and David Simmons.
\newblock Quantitative results using variants of schmidt's game: Dimension
  bounds, arithmetic progressions, and more.
\newblock {\em arXiv:1703.09015}, 2017.

\bibitem{Jon}
Jon Chaika.
\newblock Arithmetic progressions in middle-$n$th cantor sets.
\newblock {\em arXiv:1703.08998}, 2017.

\bibitem{DajaniDeVrie}
Karma Dajani and Martijn de~Vries.
\newblock Invariant densities for random {$\beta$}-expansions.
\newblock {\em J. Eur. Math. Soc. (JEMS)}, 9(1):157--176, 2007.

\bibitem{DJKL}
Karma Dajani, Kan Jiang, Derong Kong, and Wenxia Li.
\newblock Multiple expansions of real numbers with digits set $\{0,1,q\}$.
\newblock {\em Accepted by Math.Z}, 2018.

\bibitem{KarmaKan2}
Karma Dajani, Kan Jiang, Derong Kong, and Wenxia Li.
\newblock Multiple codings for self-similar sets with overlaps.
\newblock {\em arXiv:1603.09304}, 2016.

\bibitem{MK}
Martijn de~Vries and Vilmos Komornik.
\newblock Unique expansions of real numbers.
\newblock {\em Adv. Math.}, 221(2):390--427, 2009.

\bibitem{ErdosPaul}
Paul Erd\"{o}s and Paul Tur\'{a}n.
\newblock On {S}ome {S}equences of {I}ntegers.
\newblock {\em J. London Math. Soc.}, 11(4):261--264, 1936.

\bibitem{FG}
Kenneth Falconer.
\newblock {\em Fractal geometry}.
\newblock John Wiley \& Sons, Ltd., Chichester, 1990.
\newblock Mathematical foundations and applications.

\bibitem{Yu2016}
Jonathan~M. Fraser and Han Yu.
\newblock Arithmetic patches, weak tangents, and dimension.
\newblock {\em Bull. Lond. Math. Soc.}, 50(1):85--95, 2018.

\bibitem{Furstenberg}
H.~Furstenberg, Y.~Katznelson, and D.~Ornstein.
\newblock The ergodic theoretical proof of {S}zemer\'{e}di's theorem.
\newblock {\em Bull. Amer. Math. Soc. (N.S.)}, 7(3):527--552, 1982.

\bibitem{GS}
Paul Glendinning and Nikita Sidorov.
\newblock Unique representations of real numbers in non-integer bases.
\newblock {\em Math. Res. Lett.}, 8(4):535--543, 2001.

\bibitem{GT1}
Ben Green and Terence Tao.
\newblock The primes contain arbitrarily long arithmetic progressions.
\newblock {\em Ann. of Math. (2)}, 167(2):481--547, 2008.

\bibitem{Hutchinson}
John~E. Hutchinson.
\newblock Fractals and self-similarity.
\newblock {\em Indiana Univ. Math. J.}, 30(5):713--747, 1981.

\bibitem{KKLL2017}
Vilmos Komornik, Derong Kong, and Wenxia Li.
\newblock Hausdorff dimension of univoque sets and devil's staircase.
\newblock {\em Adv. Math.}, 305:165--196, 2017.

\bibitem{Laba2016}
Izabella {\L}aba and Malabika Pramanik.
\newblock Arithmetic progressions in sets of fractional dimension.
\newblock {\em Geom. Funct. Anal.}, 19(2):429--456, 2009.

\bibitem{LWX}
 Li, Jinjun, Wu~Min and Ying Xiong.
\newblock On assouad dimension and arithmetic progressions in sets defined by
  digit restrictions.
\newblock {\em arXiv:1710.07144}, 2017.

\bibitem{Roth53}
K.~F. Roth.
\newblock On certain sets of integers.
\newblock {\em J. London Math. Soc.}, 28:104--109, 1953.

\bibitem{Shmerkin2017}
Pablo Shmerkin.
\newblock Salem sets with no arithmetic progressions.
\newblock {\em Int. Math. Res. Not. IMRN}, (7):1929--1941, 2017.

\bibitem{Szemeredi1}
E.~Szemer\'{e}di.
\newblock On sets of integers containing no four elements in arithmetic
  progression.
\newblock {\em Acta Math. Acad. Sci. Hungar.}, 20:89--104, 1969.

\bibitem{Szemeredi2}
E.~Szemer\'{e}di.
\newblock On sets of integers containing no {$k$} elements in arithmetic
  progression.
\newblock {\em Acta Arith.}, 27:199--245, 1975.
\newblock Collection of articles in memory of Juri\u{\i} Vladimirovi\v{c}
  Linnik.
\bibitem{TTT}
Terence Tao.
\newblock What is good mathematics?
\newblock {\em Bull. Amer. Math. Soc. (N.S.)}, 44(4):623--634, 2007.
\end{thebibliography}

\end{document}